\newtheorem {theorem} {Theorem}
\newtheorem {corollary} [theorem]{Corollary}
\newcommand{\R}{\mathbb{R}}
\newcommand{\z}{z}
\newcommand{\sign}{\operatorname{sign}}
\newcommand{\der}{~\mathrm{d}}
\def\dis{\displaystyle}
\def\l{\ell}
\def\e{\varepsilon}
\begin{document}

\title[Birth of limit cycles]
{Birth of limit cycles for a class of continuous and discontinuous
differential systems in $(d+2)$--dimension}

\author[J. Llibre, M.A. Teixeira and I.O. Zeli ]
{Jaume Llibre$^1$, Marco A. Teixeira$^2$ and Iris O. Zeli$^{2}$}

\address{$^1$ Departament de Matem\`{a}tiques,
Universitat Aut\`{o}noma de Barcelona, 08193 Bellaterra, Barcelona,
Catalonia, Spain.} \email{jllibre@mat.uab.cat}

\address{$^{2}$  Departamento de Matem\'{a}tica, Universidade
Estadual de Campinas, CP 6065, 13083-859, Campinas, SP, Brazil.}
\email{teixeira@ime.unicamp.br, irisfalkoliv@ime.unicamp.br}

\subjclass[2010]{34A36, 34C25, 34C29, 37G15.}

\keywords{limit cycle, averaging method, periodic orbit, polynomial
differential system, discontinuous polynomial differential systems.}

\begin{abstract}
The orbits of the reversible differential system $\dot{x}=-y$, $\dot{y}=x$,
$\dot{\z}=0$, with $x,y \in \R$ and $\z\in \R^d$, are periodic with
the exception of the equilibrium points $(0,0, \z)$.
We compute the maximum number of limit cycles which bifurcate from
the periodic orbits of the system $\dot{x}=-y$, $\dot{y}=x$,
$\dot{\z}=0$, using the averaging theory of first order, when this
system is perturbed, first inside the class of all polynomial
differential systems of degree $n$, and second inside the class of
all discontinuous piecewise polynomial differential systems of
degree $n$ with two pieces, one in $y> 0$ and the other in $y<0$. In
the first case this maximum number is $n^d(n-1)/2$, and in the
second is $n^{d+1}$.
\end{abstract}

\maketitle

\section{Introduction and statements of the main results}

Limit cycles have been used to model the behavior of many real
process and different modern devices.  In general to prove the
existence of limit cycles is a very difficult problem. One way to
produce limit cycles is perturbing  differential systems that have a
linear center. In this case, the limit cycles in a perturbed system
bifurcate from the periodic orbits of the unperturbed center.  The
search  for the maximum number of limit cycles that polynomial
differential systems of a given degree can have  is part of {\it
$16^{th}$ Hilbert's Problem} and many contributions have been made
in this direction, see for instance \cite{Hil,Ily,Lii} and the
references quoted therein.

\smallskip

Recently the theory of limit cycles has also been studied in
discontinuous piecewise differential systems. The analysis of these
systems can be traced from Andronov {\it et al.} \cite{AVK} and
still continues to receive attention by researchers. Discontinuous
piecewise differential systems is a subject that have been developed
very fast due to its strong applications to  other branches of
science. Currently such systems are one of the connections between
mathematics, physics and engineering. These systems model several
phenomena in control systems, impact in mechanical systems,
nonlinear oscillations and  economics  see for instance
\cite{Bar,BSC,Bro,Chi,Ito,Min}. Recently they have been shown to be
also relevant as idealized models for biology \cite{Kri} and models
of cell activity \cite{Co,To,TG}. For more details see Teixeira
\cite{Tei} and all references therein.

\smallskip

As we have said it is not simple to determine the existence of limit
cycles in a  differential system. The simplest case for determining
limit cycles is in planar continuous piecewise linear systems when
they have only two linear differential systems separated by a straight line.
Even in this simple case, only after a delicate analysis it was
possible to show the existence of at most of  one limit cycle for
such systems, see \cite{FPRT} or an easier proof in \cite{LMP}.

\smallskip

Planar discontinuous piecewise linear differential systems with only
two linearity regions separated by a straight line have been studied
recently in \cite{HZ, HY}, among other papers. In  \cite{HZ} some
results about the existence of two limit cycles appeared, so that
the authors conjectured that the maximum number of limit cycles for
this class of piecewise linear differential systems is exactly two.
However in \cite{HY} strong numerical evidence about the existence
of three limit cycles was obtained.  As far we know the example in
\cite{HY} represents the first discontinuous piecewise linear
differential system with two zones with $3$ limit cycles surrounding
a unique equilibrium. Recently in \cite{LP} it is  proved that such
a system really has three limit cycles.

\smallskip

There are several papers studying the limit cycles of the continuous
piecewise linear differential systems in $\R^3$, see for instance
\cite{CLT,LP1,LPR,LPRT,LPT}. Our goal is study  the periodic solutions
of discontinuous piecewise polynomial differential systems in
$\R^{d+2}$.  More precisely the objective of this paper is to study
the existence of limit cycles in  continuous and discontinuous
piecewise polynomial differential systems in $\R^{d+2}$, where the
discontinuous differential system has two zones of continuity
separated by a hyperplane.  Without loss of generality we shall assume that the set of discontinuity is the hyperplane $y=0$
in $\R^{d+2}$. So we consider the linear differential  system in
$\R^{d+2}$ given by

\begin{align}\label{eq.system.linear}
 \dot x = &  - y \nonumber\\
 \dot y = & ~x \\
 \dot \z_\l = & ~0 \nonumber
\end{align}
 
 for $\l=1, \ldots, d$ and $x,y \in \R$, $\z \in \R^d$, where the dot denotes derivative with respect to the time $t$, which is reversible with respect to $\phi(x,y,\z)=(x,-y,\z)$ . First we are interested in studying the existence of limit cycles of the continuous polynomial differential system given by

\begin{align}
\label{eq.systemX.perturbed}
    \dot{x} = & -y + \e P_a(x,y,\z) ,\nonumber\\
    \dot{y} = & ~ x + \e P_b(x,y,\z),\\
    \dot{\z_\l} = & ~\e P_{c_\l}(x,y,\z), \nonumber
 \end{align}
 
 and after we also study the existence of limit cycles of the discontinuous piecewise polynomial differential system formed by two polynomial differential systems separated by the  hyperplane
 $y=0$,  namely
 
 \begin{equation}
  \label{eq.perturbed.discontinuous}
 \begin{array}{c}
 \left.
 \begin{array}{ll}
\dot{x} ~ =  & -y + \e P_a(x,y,\z) , \vspace*{0.15 cm}\\
\dot{y} ~ = &~ x + \e P_b(x,y,\z), \vspace*{0.15 cm}\\
\dot{\z_\l} = & ~\e P_{c_\l}(x,y,\z), 
 \end{array}
 \right\} \quad \hbox{if}\quad y>0 \vspace*{0.25 cm} \\ 
 \left.
 \begin{array}{ll}
 \dot{x} ~=  & -y + \e Q_a(x,y,\z) , \vspace*{0.15 cm}\\
 \dot{y} ~= &~ x + \e Q_b(x,y,\z), \vspace*{0.15 cm}\\
 \dot{\z_\l} = & ~\e Q_{c_\l}(x,y,\z),
 \end{array}
 \right\} \quad \hbox{if}\quad y < 0 
 \end{array}
\end{equation}
 
\noindent where $\e \neq 0$ is a small parameter and $\l=1, \ldots,d$. In this systems the polynomials $P_a$, $P_b$, $P_{c_\l}$, $Q_\alpha$, $Q_\beta$, $Q_{\gamma_\l}$ are   of degree $n$ in the variables $x$, $y$ and $\z$, more precisely
\begin{align*}
P_a(x,y,z) = &\sum_{i+j+k=0}^{n} a_{ijk} x^i y^j z^k, \quad
P_b(x,y,z)= \sum_{i+j+k=0}^{n} b_{ijk} x^i y^j z^k, \\
P_{c_\l}(x,y,z)= &\sum_{i+j+k=0}^{n} c_{\l ijk} x^i y^j z^k, \quad Q_a(x,y,z) = \sum_{i+j+k=0}^{n} \alpha_{ijk} x^i y^j z^k ,\\
Q_\beta(x,y,z)= & \sum_{i+j+k=0}^{n} \beta_{ijk} x^i y^j z^k, \quad Q_{\gamma_\l}(x,y,z)= \sum_{i+j+k=0}^{n} \gamma_{\l ijk} x^i y^j z^k.
\nonumber
\end{align*}

In this expressions  $k$ is a multi-index and  $i+j+k$ denotes
$i+j+k_1 + \ldots + k_d$, $\z^k$ denotes the product $z_1^{k_1} \ldots
z_d^{k_d}$ with $\z=(z_1,\ldots,z_d)\in \R^d$, and $a_{ijk}$ denotes
the coefficient $a_{ijk_1 \ldots k_d}$ of $x^i y^j z_1^{k_1} \ldots
z_d^{k_d}$.

\smallskip

It is clear that systems \eqref{eq.systemX.perturbed} and
\eqref{eq.perturbed.discontinuous} coincide for  $\varepsilon = 0$
and they have linear centers at every plane $\z=$ constant.  In this
paper we establish for $\varepsilon \neq 0$ sufficiently small the
maximum number of limit cycle of these systems  that bifurcate from
the periodic orbits of these linear centers using the averaging
theory of first order. The following result presents the results for
the continuous case.

\begin{theorem}\label{th.cycles.systemX}
Using  the averaging theory of first order for $|\varepsilon| \neq
0$ sufficiently small the maximum number of limit cycles of the
polynomial differential system \eqref{eq.systemX.perturbed} is at
most $n^d(n-1)/2$, and this number is reached.
\end{theorem}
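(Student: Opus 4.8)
The plan is to apply the averaging method of first order recalled above. First I would pass to the cylindrical coordinates $x=r\cos\theta$, $y=r\sin\theta$, $\z=\z$; a direct computation turns \eqref{eq.systemX.perturbed} into
\[
\dot r=\e\big(\cos\theta\,P_a+\sin\theta\,P_b\big),\qquad
\dot\theta=1+\tfrac{\e}{r}\big(\cos\theta\,P_b-\sin\theta\,P_a\big),\qquad
\dot\z_\ell=\e P_{c_\ell},
\]
where $P_a,P_b,P_{c_\ell}$ are evaluated at $(r\cos\theta,r\sin\theta,\z)$. Since $\dot\theta=1+O(\e)$ on any set $r\ge r_0>0$, I would take $\theta$ as the new independent variable, obtaining $dr/d\theta=\e F_0(\theta,r,\z)+O(\e^2)$ and $d\z_\ell/d\theta=\e F_\ell(\theta,r,\z)+O(\e^2)$, a system $2\pi$-periodic in $\theta$ and in standard form for first order averaging in the variable $u=(r,\z)\in(0,\infty)\times\R^d$. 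By the averaging theorem, each simple zero $(r^\ast,\z^\ast)$ (nonvanishing Jacobian) of the averaged map $f=(f_0,f_1,\dots,f_d)$, with $f_i(r,\z)=\tfrac1{2\pi}\int_0^{2\pi}F_i(\theta,r,\z)\,\der\theta$, yields for $0<|\e|$ small exactly one limit cycle of \eqref{eq.systemX.perturbed} near $r=r^\ast$, $\z=\z^\ast$; so it suffices to bound, and then to realize, the number of such zeros.

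Next I would compute $f$ explicitly. Using that $\int_0^{2\pi}\cos^a\theta\sin^b\theta\,\der\theta=0$ unless $a$ and $b$ are both even, a monomial $x^iy^j\z^k$ of $P_a$ contributes to $f_0$ only when $i$ is odd and $j$ even, a monomial of $P_b$ only when $i$ is even and $j$ odd, and a monomial of $P_{c_\ell}$ contributes to $f_\ell$ only when $i,j$ are both even; in every case the surviving power of $r$ is $i+j$. Hence, writing $R=r^2$,
\[
f_0(r,\z)=r\,g_0(R,\z),\qquad f_\ell(r,\z)=h_\ell(R,\z)\quad(\ell=1,\dots,d),
\]
where $g_0$ is a linear combination of the monomials $R^s\z^k$ with $2s+|k|\le n-1$ and each $h_\ell$ a linear combination of the monomials $R^s\z^k$ with $2s+|k|\le n$. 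Moreover these polynomials are otherwise arbitrary and mutually independent: the coefficient of $R^s\z^k$ in $g_0$ is a nonzero multiple of $a_{(2s+1)\,0\,k}$, that of $R^s\z^k$ in $h_\ell$ a nonzero multiple of $c_{\ell\,(2s)\,0\,k}$, and these coefficients occur nowhere else; setting all others to zero, $g_0$ ranges over all polynomials of the first shape and each $h_\ell$ over all of the second. As $r>0$, the zeros of $f$ we need are in bijection with the solutions $(R,\z)\in(0,\infty)\times\R^d$ of
\[
g_0(R,\z)=h_1(R,\z)=\dots=h_d(R,\z)=0,
\]
and simple zeros of $f$ correspond to solutions at which this polynomial system has nonvanishing Jacobian.

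For the upper bound I would show this system has at most $n^d(n-1)/2$ isolated complex solutions. Give $R$ weight $2$ and each $z_i$ weight $1$; then $g_0$ has weighted degree $\le n-1$ and each $h_\ell$ weighted degree $\le n$. Homogenizing with one extra variable of weight $1$ produces $d+1$ weighted-homogeneous polynomials of weighted degrees $n-1,n,\dots,n$ on $\mathbb{P}(2,1,\dots,1)$ (with $d+2$ homogeneous coordinates), whose affine chart where the homogenizing variable equals $1$ is the space of the $(R,\z)$. By the Bézout theorem in weighted projective space --- equivalently by the Bernstein--Kushnirenko bound, since the mixed volume of the Newton polytopes equals $\tfrac{n-1}{n}\cdot(d+1)!\cdot\tfrac12\cdot\tfrac{n^{d+1}}{(d+1)!}=\tfrac{n^d(n-1)}{2}$ --- the number of common zeros counted with multiplicity equals $n^d(n-1)/2$ whenever it is finite, so the number of isolated, hence of simple, real solutions with $R>0$ is at most $n^d(n-1)/2$; if the averaged system fails to be zero-dimensional, a small admissible perturbation of the coefficients makes it so without destroying any simple zero.

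For the lower bound I would realize $n^d(n-1)/2$ simple real solutions with $R>0$. If $n$ is odd, set $h_\ell(R,\z)=\prod_{j=1}^{n}(z_\ell-j)$ for $\ell=1,\dots,d$ (of the allowed shape), so that $h_1=\dots=h_d=0$ exactly on the grid $\{1,\dots,n\}^d$, and take $g_0$ a polynomial in $R$ alone of degree $(n-1)/2$ with $(n-1)/2$ positive roots (allowed, since $2\cdot\tfrac{n-1}{2}=n-1$); the system decouples into $n^d\cdot\tfrac{n-1}{2}$ solutions, all simple for a generic choice. If $n=2m$ is even, keep $h_\ell=\prod_{j=1}^{n}(z_\ell-j)$ for $\ell=2,\dots,d$, take $h_1(R,z_1)=\prod_{i=1}^{m}(z_1^2-\mu_iR-\nu_i)$ (weighted degree $2m=n$, allowed), and take $g_0=g_0(R,z_1)$ generic with $2s+k\le n-1$; on the parabola $z_1^2=\mu_iR+\nu_i$ the restriction of $g_0$ is a degree-$(2m-1)$ polynomial in $z_1$, and one checks that the $\mu_i,\nu_i$ and $g_0$ can be chosen so that each of the $m$ restrictions has $2m-1$ distinct real roots with $R=(z_1^2-\nu_i)/\mu_i>0$; together with the $n^{d-1}$ choices of $(z_2,\dots,z_d)$ this gives $m(2m-1)\,n^{d-1}=n^d(n-1)/2$ simple real solutions with $R>0$. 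I expect the genuine obstacle to be precisely this last step in the even case --- forcing the $m$ univariate restrictions of a single $g_0$ to attain their full real root count, where the coefficient matching is overdetermined and demands a careful or explicit choice of the parabolas --- together with verifying that the resulting zeros of $f$ are simple, and making the weighted-Bézout count fully rigorous in the non-generic situation.
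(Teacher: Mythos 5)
Your reduction to standard form, the computation of the averaged functions, the parity analysis of the trigonometric integrals, and the upper bound are all sound and essentially coincide with the paper's argument: the paper applies ordinary B\'ezout to $(\bar f_1,f_2,\dots,f_{d+1})$ of degrees $n-1,n,\dots,n$ and then halves the count using that every component is even in $r$, while your substitution $R=r^2$ together with weighted B\'ezout/BKK is an equivalent (and arguably cleaner) packaging of the same count --- just note that the pure torus version of Bernstein's bound must be upgraded to an affine count, which is fine here since every Newton polytope contains the origin. Your odd-$n$ realization is also the paper's: a fully decoupled system, one univariate factor per unknown.

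The genuine gap is exactly where you suspected it: the even-$n$ lower bound. Your construction $h_1=\prod_{i=1}^{m}(z_1^2-\mu_i R-\nu_i)$ asks a single $g_0$ of weighted degree at most $2m-1$ in $(R,z_1)$ --- which has only $m(m+1)$ free coefficients --- to restrict to each of the $m$ parabolas as a degree-$(2m-1)$ polynomial in $z_1$ with $2m-1$ distinct real roots all lying over $R>0$, i.e.\ on the order of $2m^2$ conditions; for $m\ge 3$ this is overdetermined if one tries to prescribe the restrictions, and you give no argument that the relevant open set of admissible coefficients is nonempty. As written this step does not go through, and it is also unnecessary: you can keep the system fully decoupled in the even case as well, simply by swapping which equation carries the radial variable. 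Take $g_0$ a polynomial in $z_1$ alone of degree $n-1$ with $n-1$ simple real roots (allowed, since $2\cdot 0+(n-1)\le n-1$; realized by the coefficients $a_{10k_1}$, $b_{01k_1}$); take $h_1$ a polynomial in $R$ alone of degree $n/2$ with $n/2$ simple positive roots (allowed, since $2\cdot(n/2)+0\le n$; realized by the coefficients $c_{1ij0}$ with $i+j$ even); and take $h_\ell=\prod_{j=1}^{n}(z_\ell-j)$ for $\ell\ge 2$. The system decouples into $(n-1)\cdot(n/2)\cdot n^{d-1}=n^d(n-1)/2$ solutions with $R>0$, the Jacobian is a permutation of a diagonal matrix and hence nonzero at each simple root, and Theorem \ref{Th.continuous.averaging} applies. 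This is precisely the example the paper uses.
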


In the next theorem we present results for the discontinuous
piecewise polynomial differential system
\eqref{eq.perturbed.discontinuous}.

\begin{theorem}\label{th.cycles.discontinuous}
Using the averaging theory of first order for $|\varepsilon| \neq 0$
sufficiently small the maximum number of limit cycles of the
discontinuous piecewise polynomial differential system
\eqref{eq.perturbed.discontinuous} is at most $n^{d+1}$, and this
number is  reached.
\end{theorem}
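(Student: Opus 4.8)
The plan is to reduce both the upper bound and the sharpness to counting simple zeros of an explicit polynomial map in $d+1$ variables. First I would pass to cylindrical coordinates $(x,y,\z)\mapsto(r,\theta,\z)$ with $x=r\cos\theta$, $y=r\sin\theta$ and $r>0$. For $r$ in a fixed compact subinterval of $(0,\infty)$ and $|\e|$ small, system \eqref{eq.perturbed.discontinuous} has $\dot\theta=1+O(\e)>0$, so the flow crosses $y=0$ transversally (no sliding occurs) and $\theta$ may be used as the new independent variable, giving a $2\pi$-periodic non-autonomous equation
\[
\frac{d}{d\theta}(r,z_1,\dots,z_d)=\e\,\mathcal{F}(\theta,r,\z)+O(\e^{2}),
\]
where $\mathcal{F}(\theta,\cdot)=\mathcal{F}^{+}(\theta,\cdot)$ on $(0,\pi)$ and $\mathcal{F}(\theta,\cdot)=\mathcal{F}^{-}(\theta,\cdot)$ on $(\pi,2\pi)$, with discontinuities at $\theta=0,\pi$; here $\mathcal{F}^{+}=\big(P_a\cos\theta+P_b\sin\theta,\,P_{c_1},\dots,P_{c_d}\big)$ evaluated at $(x,y,\z)=(r\cos\theta,r\sin\theta,\z)$, and $\mathcal{F}^{-}$ is the same expression with $Q_a,Q_b,Q_{\gamma_\l}$ replacing $P_a,P_b,P_{c_\l}$. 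Then I would invoke the first-order averaging theorem for discontinuous differential systems: the limit cycles of \eqref{eq.perturbed.discontinuous} bifurcating from the periodic orbits of the unperturbed system correspond to the simple zeros $(r^{*},\z^{*})$ with $r^{*}>0$ of the averaged map
\[
f(r,\z)=\int_{0}^{\pi}\mathcal{F}^{+}(\theta,r,\z)\der\theta+\int_{\pi}^{2\pi}\mathcal{F}^{-}(\theta,r,\z)\der\theta=\big(f_{0}(r,\z),f_{1}(r,\z),\dots,f_{d}(r,\z)\big).
\]

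Next I would identify precisely which maps $f$ can arise. Substituting $x=r\cos\theta$, $y=r\sin\theta$ turns a monomial $a_{ijk}x^{i}y^{j}\z^{k}$ into $a_{ijk}\,r^{i+j}\cos^{i}\theta\sin^{j}\theta\,\z^{k}$ with $i+j+|k|\le n$, so $r^{i+j}\z^{k}$ has degree $\le n$ in $(r,z_1,\dots,z_d)$; combined with $\int_{\pi}^{2\pi}\cos^{p}\theta\sin^{q}\theta\der\theta=(-1)^{p+q}\int_{0}^{\pi}\cos^{p}\theta\sin^{q}\theta\der\theta$ and the fact that $\int_{0}^{\pi}\cos^{p}\theta\sin^{q}\theta\der\theta$ is positive for $p$ even and zero for $p$ odd, this shows every component $f_{m}$ is a real polynomial in $r,z_{1},\dots,z_{d}$ of degree at most $n$. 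For the converse I would use that $P_a,P_b,P_{c_1},\dots,P_{c_d}$ (and their $Q$-analogues) involve pairwise disjoint free coefficients, and that the monomial $r^{m}\z^{k}$ with $m+|k|\le n$ is produced in $f_{0}$ through the coefficient $b_{0mk}$ (via $\int_{0}^{\pi}\sin^{m+1}\theta\der\theta\neq0$) and in $f_{\l}$ through $c_{\l 0 m k}$ (via $\int_{0}^{\pi}\sin^{m}\theta\der\theta\neq0$), so that the coefficient of each such monomial in each $f_{m}$ can be prescribed independently. Hence, as the perturbation ranges over all admissible polynomials, $f$ ranges over \emph{all} $(d+1)$-tuples $(f_0,\dots,f_d)$ with each $f_m$ a real polynomial in $r,z_1,\dots,z_d$ of degree $\le n$.

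It then remains to count. For the upper bound, $f(r,\z)=0$ is a system of $d+1$ polynomial equations of degree at most $n$ in the $d+1$ unknowns $r,z_1,\dots,z_d$, so by B\'ezout's theorem it has at most $n^{d+1}$ isolated zeros in $\mathbb{C}^{d+1}$; a simple zero being isolated, \eqref{eq.perturbed.discontinuous} has at most $n^{d+1}$ limit cycles bifurcating from the periodic orbits of the linear centers. For the lower bound, by the previous paragraph I would choose the perturbation so that $f_{0}(r,\z)=\prod_{s=1}^{n}(r-s)$ and $f_{\l}(r,\z)=\prod_{s=1}^{n}(z_{\l}-s)$ for $\l=1,\dots,d$. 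Its common zeros are the $n^{d+1}$ points with $r\in\{1,\dots,n\}$ and each $z_{\l}\in\{1,\dots,n\}$; all have $r>0$, and at each the Jacobian of $f$ is diagonal with nonzero diagonal entries, hence all are simple. Therefore $n^{d+1}$ limit cycles are obtained and the bound is reached.

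The reduction to $\theta$ as independent variable and the use of the discontinuous averaging theorem are routine, and the product example settles the sharpness once the realizability statement is in place; so the main obstacle, and the step requiring care, is the parity bookkeeping in the second paragraph: confirming that the alternating signs produced by the integrals over $(\pi,2\pi)$ impose no linear relation among the coefficients of the $f_m$ and that no monomial of degree $\le n$ is missed, so that the realizable class of averaged maps is genuinely the full space of $(d+1)$-tuples of polynomials of degree $\le n$ and B\'ezout's estimate is attained rather than merely an upper bound.
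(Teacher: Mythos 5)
Your proposal is correct and follows essentially the same route as the paper: cylindrical coordinates with $\theta$ as the new time, the first--order averaging theorem for discontinuous systems, B\'ezout's theorem for the upper bound $n^{d+1}$, and an explicit choice of coefficients making $f_0$ a degree-$n$ polynomial in $r$ alone and each $f_\l$ a degree-$n$ polynomial in $z_\l$ alone with all roots simple (and $r>0$). Your intermediate surjectivity claim --- that every $(d+1)$-tuple of polynomials of degree $\le n$ is realizable, via the free coefficients $b_{0mk}$ and $c_{\l 0mk}$ and the positivity of $\int_0^\pi\sin^{q}\theta\der\theta$ --- is a slightly stronger packaging of the same parity bookkeeping the paper carries out, and the product example it enables is equivalent to the paper's construction.
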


\begin{corollary}\label{cor.1}
Under the assumptions of Theorem \ref{th.cycles.systemX} if
additionally $a_{00k}=b_{00k}=0$ for all $k$, the limit cycles can
be chosen as close to the origin of $\R^{d+2}$ as we want.
\end{corollary}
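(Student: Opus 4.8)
The plan is to go back to the proof of Theorem \ref{th.cycles.systemX} and to observe that the extra hypothesis $a_{00k}=b_{00k}=0$ for all $k$ is precisely what makes the first order averaging method applicable on a neighbourhood of the rotation axis $x=y=0$; once this is available, a scaling of the coefficients pushes the $n^{d}(n-1)/2$ limit cycles arbitrarily close to the origin of $\R^{d+2}$ without changing their number.

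First I would recall the cylindrical coordinates used in the proof of Theorem \ref{th.cycles.systemX}: setting $x=r\cos\theta$, $y=r\sin\theta$ and taking $\theta$ as the independent variable, system \eqref{eq.systemX.perturbed} becomes
\begin{equation*}
\frac{dr}{d\theta}=\e\,\frac{P_a\cos\theta+P_b\sin\theta}{1+\e\,(xP_b-yP_a)/r^{2}},\qquad
\frac{dz_\l}{d\theta}=\e\,\frac{P_{c_\l}}{1+\e\,(xP_b-yP_a)/r^{2}},
\end{equation*}
with $P_a,P_b,P_{c_\l}$ evaluated at $(r\cos\theta,r\sin\theta,\z)$, and the first order averaged function is the map $\mathcal F=(f_r,f_{z_1},\dots,f_{z_d})$ that appears there. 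Expanding $(xP_b-yP_a)/r^{2}$ into monomials, the only ones carrying a factor $r^{-1}$ are those coming from the $i=j=0$ terms of $P_a$ and $P_b$, namely the ones with coefficients $a_{00k}$ and $b_{00k}$ (equivalently, $a_{00k}=b_{00k}=0$ for all $k$ is exactly the condition that makes $x=y=0$ invariant for \eqref{eq.systemX.perturbed}). Hence, under our hypothesis, $(xP_b-yP_a)/r^{2}$ is a polynomial in $r,\z,\cos\theta,\sin\theta$, the right hand sides above extend analytically across $\{r=0\}$, and for $|\e|$ small they have the form $\e\,G(r,\z,\theta)+O(\e^{2})$ with bounds uniform on $\{0\le r\le r_0,\ |\z|\le R\}\times\mathbb{S}^{1}$ for any fixed $r_0,R>0$. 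Therefore the averaging theory of first order applies on the whole region $\{0<r<r_0,\ |\z|<R\}$, with $r_0,R$ fixed and the threshold on $\e$ uniform.

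Next I would use the scaling symmetry: for $\lambda>0$, replacing in \eqref{eq.systemX.perturbed} every coefficient $a_{ijk}$, $b_{ijk}$, $c_{\l ijk}$ by $\lambda^{\,i+j+k-1}$ times itself (here $i+j+k$ stands for $i+j+k_1+\dots+k_d$ as above) yields again a system of the form \eqref{eq.systemX.perturbed}, still satisfying $a_{00k}=b_{00k}=0$; this amounts to the change of coordinates $(x,y,\z)\mapsto(\lambda x,\lambda y,\lambda\z)$, and a short computation gives $\mathcal F_\lambda(r,\z)=\lambda^{-1}\mathcal F(\lambda r,\lambda\z)$ for the corresponding averaged map. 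Thus if $(\rho_i,\zeta_i)$, $i=1,\dots,n^{d}(n-1)/2$, are the simple zeros (with $\rho_i>0$) of $\mathcal F$ produced in the proof of Theorem \ref{th.cycles.systemX}, then $(\rho_i/\lambda,\zeta_i/\lambda)$ are simple zeros of $\mathcal F_\lambda$, and for $\lambda$ large enough all of them lie inside $\{0<r<r_0,\ |\z|<R\}$ and within a prescribed distance $\delta$ of $(r,\z)=(0,0)$.

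Finally, applying the first order averaging theory on $\{0<r<r_0,\ |\z|<R\}$ to this scaled system — legitimate by the first step, since the hypothesis is preserved — one gets, for $|\e|\neq0$ small (now depending on $\lambda$, i.e.\ on $\delta$), exactly $n^{d}(n-1)/2$ limit cycles, each $O(\e)$-close to a circle $\{x^{2}+y^{2}=(\rho_i/\lambda)^{2},\ \z=\zeta_i/\lambda\}$, hence contained in an $O(\delta)$-neighbourhood of the origin of $\R^{d+2}$. As $\delta>0$ is arbitrary, the conclusion follows. The delicate point is the first step: one must check carefully that suppressing the monomials $a_{00k}$ and $b_{00k}$ removes the $r^{-1}$ singularity of the polar vector field, and that the averaging theorem one invokes indeed allows the slow region to reach $\{r=0\}$; the scaling step and the bookkeeping of the zeros are then routine, given Theorem \ref{th.cycles.systemX}.
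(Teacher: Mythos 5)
Your proposal is correct, and its central observation --- that the hypothesis $a_{00k}=b_{00k}=0$ is exactly what removes the $r^{-1}$ term from $\dot\theta$, so that the passage to $\theta$ as independent variable and the first--order averaging remain valid on a full ball around the origin rather than only on the cylindrical annulus $\tilde{A}$ --- is precisely the one the paper uses. Where you differ is in the second half. The paper stops at the remark that in the ball $B$ the variable $r$ ``can be approximated to zero as we want'' and that the computations proceed as before, leaving implicit how one actually forces the zeros of the averaged map (and hence the limit cycles) into a prescribed neighbourhood of the origin. You supply an explicit mechanism: the rescaling $a_{ijk}\mapsto\lambda^{i+j+k-1}a_{ijk}$ (equivalently $(x,y,\z)\mapsto(\lambda x,\lambda y,\lambda \z)$), which preserves the form of system \eqref{eq.systemX.perturbed} and the hypothesis $a_{00k}=b_{00k}=0$, and which gives $\mathcal{F}_\lambda(r,\z)=\lambda^{-1}\mathcal{F}(\lambda r,\lambda \z)$, hence sends the simple zeros $(\rho_i,\zeta_i)$ to $(\rho_i/\lambda,\zeta_i/\lambda)$ without changing their number or their nondegeneracy (since $D\mathcal{F}_\lambda(w)=D\mathcal{F}(\lambda w)$). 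This makes rigorous, at essentially no extra cost, the localization claim that the paper only asserts; the price is that the admissible range of $\varepsilon$ now depends on $\lambda$, which you correctly acknowledge and which is all the statement of the corollary requires.
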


\begin{corollary}\label{cor.2} 
Under the assumptions of Theorem \ref{th.cycles.discontinuous} if additionally  $a_{00k}=b_{00k}=\alpha_{00k}=\beta_{00k}=0$ for all $k$, the number of limit cycles of the discontinuous piecewise polynomial differential system \eqref{eq.perturbed.discontinuous} is at most $n^{d}(n-1)$, and this number is reached. Furthermore,  the limit cycles can be chosen as close to the origin of $\R^{d+2}$ as we want.
\end{corollary}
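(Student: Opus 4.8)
The plan is to carry out the first order averaging method exactly as in the proof of Theorem~\ref{th.cycles.discontinuous} and then to read off the effect of the four vanishing conditions on the averaged function. Passing to the coordinates $x=r\cos\theta$, $y=r\sin\theta$ (leaving $\z$ unchanged) and reparametrising the orbits by $\theta$, system \eqref{eq.perturbed.discontinuous} becomes a $2\pi$--periodic discontinuous system in $(r,\z)$ whose switching set is $\theta\in\{0,\pi\}$, and (up to a positive multiplicative constant) its first order averaged function is the polynomial map $f=(f_0,f_1,\dots,f_d)\colon\R^{d+1}\to\R^{d+1}$ with
\[
f_0(r,\z)=\int_0^{\pi}\!\big(P_a\cos\theta+P_b\sin\theta\big)\der\theta+\int_{\pi}^{2\pi}\!\big(Q_a\cos\theta+Q_b\sin\theta\big)\der\theta,\qquad f_\ell(r,\z)=\int_0^{\pi}\! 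P_{c_\ell}\der\theta+\int_{\pi}^{2\pi}\! Q_{c_\ell}\der\theta,
\]
all integrands evaluated along $(x,y)=(r\cos\theta,r\sin\theta)$. Each $f_i$ is a polynomial in $(r,z_1,\dots,z_d)$ of total degree at most $n$ (this is exactly what yields the bound $n^{d+1}$ in Theorem~\ref{th.cycles.discontinuous}), and every simple zero with $r>0$ produces a limit cycle. Now the sum of the monomials of $f_0$ of degree $0$ in $r$ comes only from the $i=j=0$ monomials of $P_a,P_b,Q_a,Q_b$, so it depends solely on $a_{00k},b_{00k},\alpha_{00k},\beta_{00k}$; under the hypotheses of the corollary it vanishes identically, hence $f_0=r\,g_0$ with $g_0$ a polynomial of total degree at most $n-1$. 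Thus the limit cycles correspond to the simple zeros with $r>0$ of the system $g_0=f_1=\cdots=f_d=0$, whose degrees are $n-1,n,\dots,n$; by B\'ezout's theorem there are at most $(n-1)n^{d}=n^{d}(n-1)$ of them, which is the asserted upper bound.

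For the reverse inequality I would argue, as in the proof of Theorem~\ref{th.cycles.discontinuous}, that the half--period trigonometric moments $\int_0^{\pi}\cos^{i}\theta\sin^{j}\theta\der\theta$ appearing as coefficients are nonzero for the relevant indices, so $g_0,f_1,\dots,f_d$ may be prescribed freely within their degree bounds — for instance $f_\ell$ is determined arbitrarily by the coefficients $c_{\ell 0jk}$ and $g_0$ by the $b_{0jk}$, while $a_{00k},\alpha_{00k}$ do not enter the first order averaged function at all, so the hypotheses on them are costless. Taking the decoupled configuration $g_0=\prod_{t=1}^{n-1}(r-\rho_t)$ and $f_\ell=\prod_{s=1}^{n}(z_\ell-\zeta_{\ell s})$ with the $\rho_t$ distinct and positive and each $\{\zeta_{\ell 1},\dots,\zeta_{\ell n}\}$ consisting of distinct reals, the common zeros are precisely the $(n-1)n^{d}$ points $(\rho_t,\zeta_{1s_1},\dots,\zeta_{ds_d})$, and each is simple because there the Jacobian of $(g_0,f_1,\dots,f_d)$ is triangular with nonvanishing diagonal. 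Hence the maximum $n^{d}(n-1)$ is attained.

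To place the cycles arbitrarily near the origin it suffices to choose all the $\rho_t,\zeta_{\ell s}$ above inside a ball of radius $\eta$: then each zero of $f$ lies in the ball of radius $\eta\sqrt{d+1}$, and, $\eta$ being fixed, for $|\varepsilon|$ small enough the associated limit cycle is $O(\varepsilon)$--close to the circle $\{x^2+y^2=\rho_t^2,\ \z=(\zeta_{1s_1},\dots,\zeta_{ds_d})\}$, which lies within distance $\eta\sqrt{d+1}$ of the origin; letting $\eta\to0$ and then shrinking $|\varepsilon|$ accordingly puts all $n^{d}(n-1)$ cycles in any prescribed neighbourhood of the origin. (Equivalently, the rescaling $(x,y,\z)\mapsto(\delta x,\delta y,\z)$ carries \eqref{eq.perturbed.discontinuous} into a system of the same class precisely when $a_{00k}=b_{00k}=\alpha_{00k}=\beta_{00k}=0$, giving a second way to compress the cycles radially toward $\{x=y=0\}$.) The one point that needs genuine care is the assertion, used in the previous paragraph, that the half--period moments leave $g_0$ and the $f_\ell$ unconstrained — i.e. that no parity obstruction survives the splitting of $[0,2\pi]$ at $\theta=0,\pi$; once that bookkeeping is done, everything else follows routinely from Theorem~\ref{th.cycles.discontinuous} and the first order averaging theorem for discontinuous differential systems.
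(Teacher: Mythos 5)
Your proposal is correct and follows essentially the same route as the paper: the vanishing hypotheses kill the degree-zero-in-$r$ part of the first averaged component so that it factors as $r$ times a degree $n-1$ polynomial, B\'ezout then gives the bound $n^d(n-1)$, a decoupled example with one-variable polynomials realizes it with nonzero (triangular) Jacobian, and placing the roots near zero yields the near-origin statement. The only nuance worth noting is that the paper uses the hypotheses $a_{00k}=\alpha_{00k}=b_{00k}=\beta_{00k}=0$ precisely to remove the $\varepsilon/r$ term from $\dot{\theta}$ and thereby work in a ball around the origin rather than in the cylindrical annulus; your parenthetical rescaling remark captures the same point.
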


Corollaries \ref{cor.1} and \ref{cor.2} provides information of the
Hopf bifurcation of systems \eqref{eq.systemX.perturbed} and
\eqref{eq.perturbed.discontinuous}. More precisely, Corollaries
\ref{cor.1} and \ref{cor.2} show that at least $n^d(n-1)/2$ and
$n^d(n-1)$ limit cycles of systems \eqref{eq.systemX.perturbed} and
\eqref{eq.perturbed.discontinuous} can bifurcate from the origin of
$\R^{d+2}$, respectively. The results of Corollary \ref{cor.1}  in
the particular case $n=2$ coincides with the result obtained in
Theorem 1 of \cite{LZ}.

\smallskip

To prove these results we use the classical averaging theory, see
for instance \cite{SVM,Ve} for a general introduction to this
subject. This theory have been used for years to deal with
continuous differential systems.  The principle of averaging has
been extended in many directions and recently in \cite{LNT} the
authors extend the averaging theory for  detecting limit cycles of
certain discontinuous piecewise differential systems, via the
Brouwer degree and the regularization theory.

\smallskip

As far as we know this method is one of the best methods for
determining limit cycles in discontinuous piecewise differential
systems and has already been used by some authors. In \cite{LLM} the
method is used for determining the maximum number of limit cycles
that bifurcate from the periodic solutions of some family of
isochronous cubic polynomial centers perturbed by discontinuous
piecewise cubic polynomial differential systems with two zones
separated by a straight line.  In \cite{LM} limit cycles for
discontinuous piecewise quadratic differential systems with two
zones was studied using the averaging theory. Also in \cite{No}  the
averaging theory was applied to provide sufficient conditions for the
existence of limit cycles of discontinuous  perturbed planar centers
when the discontinuity set is a union of regular curves.

\smallskip

We have organized  this paper as follows. In Section
\ref{section:preliminaries} we briefly present notation and basic
concepts of the averaging theory of first order for continuous
differential systems (see Theorem \ref{Th.continuous.averaging})
and for discontinuous differential systems (see Theorem
\ref{Th.discontinuous.averaging}). In
Section~\ref{section:proofs} we present the proof of results of this paper. More precisely, the proof of Theorem \ref{th.cycles.systemX} is presented in the Subsection \ref{section:proof.th.continuous}, and in the
Subsection~\ref{section:proof.th.discontinuous} we prove the Theorem
\ref{th.cycles.discontinuous}. Finally  in the Subsection
\ref{section:proof.cor} we presented the proofs  of the Corollaries \ref{cor.1} and \ref{cor.2}.

\section{Basic results in averaging theory}
\label{section:preliminaries}

In this section we present the basic results from the averaging
theory of first order that we shall use for proving the results of
this paper. The following theorem provides a method for studying the
existence of periodic orbits of a differential system. For more
details on the averaging method see for instance \cite{Ve}.

\smallskip

Let $D$ be an open subset of $\R^n$. We denote the points of $\R
\times D$ as $(t,x)$, and we take the variable $t$ as the time.

\begin{theorem}\label{Th.continuous.averaging}
Consider the differential system
\begin{equation}
\label{eq.system.th.continuous}
\dot{x}=\varepsilon F(t,x) + \varepsilon^2 R(t,x,\varepsilon),\quad x(0)=0,
\end{equation}
where $F: \R \times D \to \R^n$ and $R: \R \times U \times
(-\varepsilon,\varepsilon) \to \R^n$ are con\-ti\-nuous functions and
$T-$periodic in the first variable. Define the averaging function
$f:D \to \R^n$ as
\begin{equation}
\label{eq.averaging.function.th}
f(x)= \int_0^T F(s,x) \der s,
\end{equation}
and assume that
\begin{itemize}
\item[(i)] the functions $F,~ R, ~D_xF,~ D_x^2F$ and $D_xR$ are defined,
continuous and bounded by a constant $M$ (independent of
$\varepsilon$) in $[0,\infty) \times D$ and for $\varepsilon \in
(0,\varepsilon_0]$,

\item[(ii)] for $p \in D$ with $f(p)=0$ we have $|J_f(p)| \neq 0$, where
$|J_f(p)|$ denotes the determinant of the Jacobian matrix of $f$
evaluated at $p$.
\end{itemize}
Then for $|\varepsilon| > 0$ sufficiently small there exists a
$T-$periodic solution $x(t,\varepsilon)$ of the system
\eqref{eq.system.th.continuous} such that $x(0,\varepsilon) \to p$
as $\varepsilon \to 0$.
\end{theorem}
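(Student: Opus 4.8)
The plan is to prove Theorem~\ref{Th.continuous.averaging}, the first-order averaging theorem for continuous systems, by the classical fixed-point/displacement-function argument. The starting point is the variation-of-constants representation of solutions of \eqref{eq.system.th.continuous}: writing $x(t,\varepsilon)$ for the solution with $x(0,\varepsilon)=\xi$, integrate the ODE to get
\begin{equation*}
x(t,\varepsilon)=\xi+\varepsilon\int_0^t F(s,x(s,\varepsilon))\der s+\varepsilon^2\int_0^t R(s,x(s,\varepsilon),\varepsilon)\der s.
\end{equation*}
From hypothesis~(i) the right-hand side is Lipschitz in the space variable uniformly on $[0,T]\times D$ and $\varepsilon\in(0,\varepsilon_0]$, so Gronwall's inequality gives $x(t,\varepsilon)=\xi+O(\varepsilon)$ uniformly in $t\in[0,T]$, and moreover the solution depends smoothly on $(\xi,\varepsilon)$ by the theorem on differentiable dependence on initial conditions and parameters (here the bounds on $D_xF$, $D_x^2F$, $D_xR$ are used).

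Next I would introduce the \emph{displacement (Poincar\'e) map} $g(\xi,\varepsilon)=x(T,\varepsilon)-\xi$; a $T$-periodic solution of \eqref{eq.system.th.continuous} near $\xi$ corresponds exactly to a zero of $g(\cdot,\varepsilon)$. Substituting the integral representation and replacing $x(s,\varepsilon)$ by $\xi+O(\varepsilon)$ inside the integrand (legitimate by the Lipschitz bound) yields
\begin{equation*}
g(\xi,\varepsilon)=\varepsilon\int_0^T F(s,\xi)\der s+O(\varepsilon^2)=\varepsilon f(\xi)+\varepsilon^2 G(\xi,\varepsilon),
\end{equation*}
where $f$ is the averaging function \eqref{eq.averaging.function.th} and $G$ is $C^1$ in $\xi$ with bounds uniform in $\varepsilon$ (this is where $D_xF$ and $D_xR$ are needed, to differentiate the remainder under the integral sign). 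Dividing by $\varepsilon$, define $h(\xi,\varepsilon)=g(\xi,\varepsilon)/\varepsilon=f(\xi)+\varepsilon G(\xi,\varepsilon)$, which extends continuously (indeed $C^1$) to $\varepsilon=0$ with $h(\xi,0)=f(\xi)$, and whose zeros for $\varepsilon\neq 0$ are precisely the periodic solutions we seek.

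Now apply the implicit function theorem to $h$ at the point $(p,0)$: by hypothesis we have $h(p,0)=f(p)=0$, and $D_\xi h(p,0)=D_\xi f(p)=J_f(p)$, which is invertible since $|J_f(p)|\neq 0$. Hence there is a neighbourhood of $0$ in the $\varepsilon$-axis and a $C^1$ branch $\varepsilon\mapsto \xi(\varepsilon)$ with $\xi(0)=p$ and $h(\xi(\varepsilon),\varepsilon)=0$. For each such $\varepsilon\neq 0$, the solution $x(t,\varepsilon)$ starting at $\xi(\varepsilon)$ is $T$-periodic and satisfies $x(0,\varepsilon)=\xi(\varepsilon)\to p$ as $\varepsilon\to 0$, which is the assertion.

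The only delicate point is the uniformity: because hypothesis~(i) asks for bounds on $[0,\infty)\times D$ rather than on a fixed compact time interval, some care is needed to see that all the $O(\varepsilon)$ and $O(\varepsilon^2)$ estimates above are genuinely uniform and that differentiation under the integral sign is justified on all of $D$ — but the bound $M$ on $F, R, D_xF, D_x^2F, D_xR$ supplies exactly this. The rest is a routine Gronwall-plus-implicit-function-theorem argument; no genuine obstacle arises, and in fact this is a standard result that one may alternatively cite from \cite{Ve} or \cite{SVM}.
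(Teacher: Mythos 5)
The paper does not actually prove Theorem \ref{Th.continuous.averaging}; it is stated as a known result and the reader is referred to \cite{Ve,SVM} for details, so there is no in-paper argument to compare against. Your displacement-map plus implicit-function-theorem argument is precisely the classical proof of this statement found in those references, and it is correct: the Gronwall estimate giving $x(t,\varepsilon)=\xi+O(\varepsilon)$ uniformly on $[0,T]$, the expansion $g(\xi,\varepsilon)=\varepsilon f(\xi)+O(\varepsilon^{2})$ of the period map, and the application of the implicit function theorem at $(p,0)$ using $|J_f(p)|\neq 0$ are all in order, with hypothesis (i) supplying exactly the uniformity and differentiability needed.
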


Now let $h: \R \times D \to \R$ be a $\mathcal{C}^1$ function with
$0\in \R$ as a regular value, and $\Sigma = h^{-1}(0)$. Let $X,Y: \R
\times D \to \R^n$ be two continuous vector fields and assume that
$h,X$ and $Y$ are $T-$periodic in the variable $t$. We define a
\textit{discontinuous piecewise differential system} as

\begin{equation}
\label{eq.def.discontinuous.system}
\dot{x}=Z(t,x)=
\left\{
\begin{array}{c}
             X(t,x)\quad \text{if} \quad h(t,x) >0,\\
            Y(t,x) \quad \text{if} \quad h(t,x) <0.
\end{array}
\right.
\end{equation}
We rewrite the discontinuous differential system as follows.
Consider the sign function defined in $\R \backslash\left\{ 0
\right\}$ as
\begin{equation} \nonumber
\text{sign}(u) = \left\{
\begin{array}{c}
~~1 ~\quad \text{if} \quad u>0,\\
-1 \quad \text{if} \quad u<0.
\end{array}
\right.
\end{equation}
Then system \eqref{eq.def.discontinuous.system} can be written as
\begin{equation}\nonumber
\dot{x}=Z(t,x) = F_1(t,x) + \sign \left( h(t,x) \right) F_2(t,x),
\end{equation}

where
$$
F_1(t,x)= \dis\frac{1}{2} \left( X(t,x) + Y(t,x) \right)   \quad
\text{and} \quad   F_2(t,x)= \dis\frac{1}{2} \left( X(t,x) - Y(t,x)
\right).
$$

The following theorem is a version of Theorem
\ref{Th.continuous.averaging} for studying the periodic solutions of
discontinuous differential systems.

\begin{theorem} \label{Th.discontinuous.averaging}
Consider the discontinuous differential system
\begin{equation}
\label{eq.system1.th.discontinuous.averaging}
\dot{x}= \varepsilon F(t,x) + \varepsilon^2 R(t,x,\varepsilon),
\end{equation}

with

\begin{align*}
F(t,x)&= F_1(t,x) + \sign \left( h(t,x) \right) F_2(t,x),\\
R(t,x,\varepsilon)&= R_1(t,x,\varepsilon) + \sign \left( h(t,x)
\right) R_2(t,x,\varepsilon),
\end{align*}

where $F_1, F_2: \R \times D \to \R^n$, $R_1,R_2 : \R \times D
\times (-\varepsilon, \varepsilon) \to \R^n$ and $h: \R \times D \to
\R$ are continuous functions, $T-$periodic in the variable $t$. We
also suppose that $h$ is a $\mathcal{C}^1$ function with $0$ as a
regular value and we denote $\Sigma = h^{-1}(0)$. Define the
averaged function $f: D \to \R^n$ as

\begin{equation}
\label{eq.2.th.averaging.discontinuous}
f(x) = \int_0^T F(s,x) \der s,
\end{equation}
and assume that
\begin{itemize}
\item[(i)] the functions $F_1, F_2, R_1, R_2$ and $h$ are locally
Lipschitz with respect to $x$;

\item[(ii)] $\dis\frac{\partial h}{ \partial t}(t,x) \neq 0$ for
all $(t,x) \in
\Sigma$;

\item[(iii)] for $p \in C$ with $f(p)=0$, there exist a neighborhood
$U \subset C$ of $p$  such  that $f(z) \neq 0$ for all $ z \in
\overline{U} \backslash \left\{ p \right\}$ and $d_B(f,U,0) \neq 0$
($d_B$ is the Brouwer degree of $f$ in $p$).
\end{itemize}

Then for  $|\varepsilon| >0$ sufficiently small there exists a
$T-$periodic solutions $x(t,x)$ of system
\eqref{eq.system1.th.discontinuous.averaging} such that $ x(t,
\varepsilon) \to p$ as $\varepsilon \to 0$.
\end{theorem}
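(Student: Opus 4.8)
The plan is to reduce the existence of $T$-periodic solutions of \eqref{eq.system1.th.discontinuous.averaging} to the existence of zeros of a displacement map, to show that this displacement map is $\varepsilon$ times a continuous perturbation of the averaged function $f$, and to finish with the homotopy invariance of the Brouwer degree.

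First I would set up the displacement map. For $z\in D$ and $|\varepsilon|$ small let $x(t,z,\varepsilon)$ denote the solution of \eqref{eq.system1.th.discontinuous.averaging} with $x(0,z,\varepsilon)=z$. In the extended phase space, where $\dot t=1$ and the discontinuity set is $\widetilde{\Sigma}=\{(t,x):h(t,x)=0\}$, the extended field $(1,\varepsilon F+\varepsilon^2R)$ has scalar product $\partial_t h+\varepsilon\,\nabla_x h\cdot F+O(\varepsilon^2)$ with a normal $(\partial_t h,\nabla_x h)$ of $\widetilde{\Sigma}$, and by hypothesis (ii) this is nonzero on $\widetilde{\Sigma}$ for $|\varepsilon|$ small; hence every contact of the flow with $\widetilde{\Sigma}$ is a transversal crossing and no sliding occurs. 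Consequently, for $|\varepsilon|$ small, $x(t,z,\varepsilon)$ is defined on $[0,T]$, is unique, and depends continuously on $(z,\varepsilon)$ (this can be seen directly, the crossing times depending continuously on the data by the implicit function theorem; alternatively one regularizes $\sign$ by a smooth $\sign_\delta$, applies the smooth averaging theory, and lets $\delta\to 0$, which is essentially the route of \cite{LNT}). The displacement map $g(z,\varepsilon)=x(T,z,\varepsilon)-z$ is then continuous, and its zeros are exactly the initial conditions of $T$-periodic solutions.

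Next I would prove the expansion $g(z,\varepsilon)=\varepsilon f(z)+O(\varepsilon^2)$, uniformly for $z$ in compact subsets of $D$. Since $F=F_1+\sign(h)F_2$ and $R$ are bounded on compact sets, the integral equation yields the a priori bound $x(s,z,\varepsilon)=z+O(\varepsilon)$ on $[0,T]$; then, by the local Lipschitz hypothesis (i), $\int_0^t F_i(s,x(s,z,\varepsilon))\der s=\int_0^t F_i(s,z)\der s+O(\varepsilon)$ for $i=1,2$. The only delicate term is $\sign(h(s,x(s,z,\varepsilon)))$. Because $0$ is a regular value of $h$ and $\partial_t h\neq 0$ on $\Sigma$, for fixed $z$ the map $s\mapsto h(s,z)$ has only simple zeros, hence finitely many in $[0,T]$, so the set $\{s\in[0,T]:|h(s,z)|\le c\varepsilon\}$ has Lebesgue measure $O(\varepsilon)$, uniformly for $z$ in a compact set; since $|h(s,x(s,z,\varepsilon))-h(s,z)|=O(\varepsilon)$ and $F_2$ is bounded, replacing $\sign(h(s,x(s,z,\varepsilon)))$ by $\sign(h(s,z))$ changes the integral by only $O(\varepsilon)$. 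Collecting terms gives $x(T,z,\varepsilon)=z+\varepsilon\int_0^T(F_1(s,z)+\sign(h(s,z))F_2(s,z))\der s+O(\varepsilon^2)=z+\varepsilon f(z)+O(\varepsilon^2)$.

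Finally I would invoke degree theory. Set $\widehat{g}(z,\varepsilon)=g(z,\varepsilon)/\varepsilon$ for $\varepsilon\neq 0$ and $\widehat{g}(z,0)=f(z)$; by the previous step $\widehat{g}$ is continuous in $(z,\varepsilon)$ for $z\in\overline{U}$ and $|\varepsilon|$ small. By hypothesis (iii), $f=\widehat{g}(\cdot,0)$ does not vanish on $\overline{U}\setminus\{p\}$, in particular on $\partial U$, and $d_B(f,U,0)\neq 0$; hence for $|\varepsilon|$ small $\widehat{g}(\cdot,\varepsilon)$ does not vanish on $\partial U$ and, by homotopy invariance of the Brouwer degree, $d_B(\widehat{g}(\cdot,\varepsilon),U,0)=d_B(f,U,0)\neq 0$, so $\widehat{g}(\cdot,\varepsilon)$ has a zero $z_\varepsilon\in U$, which yields a $T$-periodic solution $x(t,z_\varepsilon,\varepsilon)$ of \eqref{eq.system1.th.discontinuous.averaging}. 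If $z_\varepsilon\not\to p$ as $\varepsilon\to 0$, pick $\varepsilon_n\to 0$ with $z_{\varepsilon_n}\to z^*\in\overline{U}\setminus\{p\}$; continuity of $\widehat{g}$ forces $f(z^*)=\lim_n\widehat{g}(z_{\varepsilon_n},\varepsilon_n)=0$, contradicting (iii). Hence $z_\varepsilon\to p$, and by continuous dependence $x(t,z_\varepsilon,\varepsilon)\to x(t,p,0)=p$ uniformly in $t\in[0,T]$, as asserted. The main obstacle is the middle step: controlling the discontinuous factor $\sign(h(\cdot,x(\cdot,z,\varepsilon)))$ in the expansion of $g$ and, underneath it, securing well-posedness and continuity of the Filippov flow — hypothesis (ii) is exactly what excludes sliding and keeps the exceptional time-set of measure $O(\varepsilon)$.
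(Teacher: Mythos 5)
Your proposal is essentially correct, but note that the paper itself offers no proof of this theorem: it is quoted as a known result, with the reader sent to Theorem A and Proposition 2 of \cite{LNT} (and to \cite{Ll} for the remark that $|J_f(p)|\neq 0$ implies $d_B(f,U,0)\neq 0$ when $f$ is $C^1$). So there is no in-paper argument to compare against; what you have written is a self-contained reconstruction, and it follows the same strategy as the cited source: reduce to zeros of the displacement map $g(z,\varepsilon)=x(T,z,\varepsilon)-z$, show $g=\varepsilon f+O(\varepsilon^2)$, and conclude by homotopy invariance of the Brouwer degree. The one genuinely delicate point, which you correctly isolate, is the replacement of $\sign\bigl(h(s,x(s,z,\varepsilon))\bigr)$ by $\sign\bigl(h(s,z)\bigr)$; your measure-theoretic argument (simple zeros of $s\mapsto h(s,z)$ forced by hypothesis (ii), hence $\lvert\{s:|h(s,z)|\le c\varepsilon\}\rvert=O(\varepsilon)$) is the right mechanism and is essentially what \cite{LNT} does, although they also offer the regularization route you mention in passing.

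Two places where your sketch would need to be fleshed out to be a complete proof. First, the well-posedness step: you should justify that a solution meets $\widetilde{\Sigma}$ only finitely many times on $[0,T]$ (this follows because $\tfrac{d}{ds}h(s,x(s))=\partial_t h+O(\varepsilon)$ is bounded away from zero near $\Sigma$, so consecutive crossings are separated by a uniform time gap), and that the concatenated solution depends continuously on $(z,\varepsilon)$ across each transversal crossing; this is where local Lipschitz continuity of $F_1,F_2,R_1,R_2$ and $h$ (hypothesis (i)) is actually consumed. Second, the uniformity in $z$ of the measure estimate over $\overline{U}$ deserves an explicit compactness argument (the number of zeros of $h(\cdot,z)$ is locally bounded and the lower bound $|h(s,z)|\ge c\,\mathrm{dist}(s,h(\cdot,z)^{-1}(0))$ can be taken uniform, via the implicit function theorem applied to $\Sigma$ using hypothesis (ii)). Neither point is an error; both are standard, and your identification of hypothesis (ii) as the hypothesis that simultaneously excludes sliding and controls the exceptional time set is exactly right.
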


For a proof of Theorem \ref{Th.discontinuous.averaging} see Theorem
$A$ and Proposition $2$ in \cite{LNT}. Here we emphasize that if $f$
in \eqref{eq.2.th.averaging.discontinuous} is $C^1$ then the
hypotheses $d_B(f,U,0) \neq 0$ holds if $|J_f(p)| \neq 0$, see for
more details \cite{Ll}.

\section{Proof of the main results}
\label{section:proofs}
We devoted this section for the proof of results of this paper. For this we consider the notations introduced in the Introduction.

\subsection{Proof of Theorem \ref{th.cycles.systemX}}
\label{section:proof.th.continuous}

Applying the change of variables
\begin{equation}
\label{eq.polar.coordinates}
x = r \cos\theta, \quad y =r \sin \theta \quad \text{and} \quad \z = \z,
\end{equation}
system \eqref{eq.systemX.perturbed} becomes
\begin{equation} \nonumber
\label{1*}
\begin{array}{lll}
\dot{\theta}&=& 1+ \displaystyle
\frac{\varepsilon}{r}\sum_{i+j+k=0}^{n} r^{i+j} \z^k \big(a_{ijk}
\cos^i\theta \sin^{j+1}\theta  + b_{ijk} \cos^{i+1}\theta
\sin^j\theta \big),\\
\dot{r}&=&\displaystyle \varepsilon \sum_{i+j+k=0}^{n} r^{i+j} \z^k
\left( a_{ijk} \cos^{i+1}\theta \sin^j\theta  + 
b_{ijk}\cos^{i}\theta
\sin^{j+1}\theta\right),\\
\dot{z}_{l}&= &\displaystyle \varepsilon\sum_{i+j+k=0}^{n}
c_{lijk}~r^{i+j} \z^k \cos^i\theta \sin^j\theta,
\end{array}
\end{equation} 

Essentially for $\varepsilon\ne 0$ sufficiently small  we study the existence of limit cycles  bifurcating  from  periodic
orbits of this system when $\varepsilon=0$, and that are contained in the
cylindrical annulus
\begin{equation}\label{2*}
\tilde{A}= \lbrace (\theta,r,z): r_0 \leq r \leq r_1,~ \theta\in
\mathbb S^1, ~ \z\in \R^d \rbrace.
\end{equation}
So for $\varepsilon$ sufficiently small $\dot{\theta}
>0$ for every $(\theta,r,\z) \in \tilde{A}$.

\smallskip

Now taken as new independent variable $\theta$ instead of $t$ so
\eqref{eq.systemX.perturbed} in $\tilde{A}$ can be written as
\begin{equation}
\label{eq.systemX.new.coordinates}
\begin{array}{lll}
r' &=& ~\e F_1(\theta,r, \z) +  \mathcal{O}(\e^2) \vspace*{0.15cm}\\
z'_\l &= & ~ \e  F_{\l+1}(\theta,r,z) + \mathcal{O}(\e^2) ,
\end{array}
\end{equation}

for $\l=1, \ldots,d$,  where the prime denotes derivative with respect to the variable
$\theta$, and
\begin{equation}	 
\begin{array}{lll}
\label{eq.F1.F2}
F_1(\theta,r,z) &= &\dis \sum_{i+j+k=0}^{n} r^{i+j}z^k \big( a_{ijk}
\cos^{i+1}\theta \sin^j\theta \vspace*{0.15cm}\\
 & & + b_{ijk} \cos^{i}\theta 
\sin^{j+1}\theta \big), \vspace*{0.15cm}\\
F_{\l+1}(\theta,r,z) &=& \dis \sum_{i+j+k=0}^{n} r^{i+j} z^k~c_{\l ijk}
\cos^i\theta \sin^j\theta,
\end{array}
\end{equation}
for $\l=1,2,\ldots,d$.

\smallskip

System \eqref{eq.systemX.new.coordinates} is $2\pi$--periodic 
with respect to the independent variable $\theta$, and satisfies 
the hypotheses of the Theorem \ref{Th.continuous.averaging} for 
$\varepsilon_0$ small and $D$ fixed. Now we compute  the averaged 
function $f=(f_1,f_{2}, \ldots, f_{d+1})$ given in 
\eqref{eq.averaging.function.th} with $T=2\pi$. 

\smallskip

Now taking
$$
\mu_{(p,q)} = \displaystyle\int_0^{2\pi}\cos^p\theta\sin^q \theta\der\theta,
$$
note that $ \mu_{(p,q)} \neq  0 $   if only if $p$ and $q$ are simultaneously even.

\smallskip

So we obtain
\begin{equation} 
\label{eq.f2.continuous}
\begin{array}{lll}
f_1(r,\z)&=& \dis \int_0^{2\pi} F_1(\theta,r,\z)\der \theta  = \dis \sum_{\footnotesize \begin{matrix} i+j+k=0 \\ i~odd,~j~even \end{matrix}}^{n} r^{i+j} \z^k~ a_{ijk} ~\mu_{(i+1,j)} \\
    & & +  \dis \sum_{\footnotesize \begin{matrix} i+j+k=0 \\i~even,~j~odd
\end{matrix}}^{n} r^{i+j} \z^k~ b_{ijk} ~\mu_{(i,j+1)} \vspace*{0.2cm}\\
f_{\l+1}(r,\z)&=& \dis \int_0^{2\pi} F_{\l+1}(\theta,r,\z)\der \theta =  \dis \sum_{\footnotesize\begin{matrix} i+j+k=0 \\ i,j~even
           \end{matrix}}^{n}  r^{i+j} \z^k~c_{\l ijk} ~ \mu_{(i,j)},
\end{array}
\end{equation} 
for $\l=1,2,\ldots,d$. 

\smallskip

We split the proof in two parts. First assume $n$ is odd, so
\begin{equation}
f_1(r,\z)=A_1 r + A_3 r^3 + \ldots + A_n r^n \nonumber
\end{equation}

\noindent where
\begin{equation}
\label{eq.Ap.continuous}
A_p = \sum_{k=0}^{n-p} \z^k \left( \sum_{\footnotesize \begin{matrix} i+j=p \\
i~odd,j~even \end{matrix}}  a_{ijk} ~\mu_{(i+1,j)} + \sum_{\footnotesize \begin{matrix}
i+j=p \\ i~even,j~odd \end{matrix}} b_{ijk} ~\mu_{(i,j+1)} \right),
\end{equation}
for $p=1,2,\ldots,n$. So we write $f_1=r \bar{f}_1$ with
$$
\bar{f}_1(r,\z)= A_1 + A_3 r^2 + \ldots + A_n r^{n-1}.
$$

Since $r>0$  it is sufficient to solve $(\bar{f}_1,f_{2}, \ldots,
f_{d+1})=(0,\ldots,0)$ to determine the number of solutions of $f
\equiv 0$. As $\bar{f}_1$ is a polynomial in the variables $r$ and
$\z \in \R^d$ of degree $n-1$ and  $f_{l+1}$  are polynomials in the
variables $r$ and $\z \in \R^d$ of degree  $n$ for $l=1,2,\ldots,d$,
by B\'{e}zout's theorem (see \cite{Ful}) $(\bar{f}_1,f_{2}, \ldots,
f_{d+1})$ has at most $n^d(n-1)$ solutions. However $\bar{f}_1$ is
even on variable $r$ then we consider only solutions with $r>0$, in
this case the maximum number of solutions of $f \equiv 0$ is
$n^d(n-1)/{2}$.

\smallskip

Now we  prove that this number is reached. For this, we exhibit a
particular case for which this occurs.  Let  $a_{ij0},b_{ij0} \neq
0$ and we take zero all the other $a_{ijk},b_{ijk}$, then
$\bar{f}_1(r,\z)$ is a real polynomial that does not depend of $\z\in
\R^d$ of degree $n-1$, that is
$$
\bar{f}_1= A_1 + A_3 r^2 + \ldots + A_n r^{n-1}
$$
where 
\begin{align*}
A_p=\sum_{\footnotesize \begin{matrix} i+j=p \\ i~odd,~j~even
\end{matrix}} a_{ij0}  ~ \mu_{(i+1,j)} + \sum_{\footnotesize \begin{matrix}  i+j=p \\ i~even, ~ j~odd \end{matrix}} b_{ij0}  ~ \mu_{(i,j+1)}  .
\end{align*}

On the other hand,  we take $c_{\l ijk}=0$ if $i,j, k_1,
k_{l-1},k_{l+1}, \ldots, k_d \neq 0$ for each  $\l=1,2,\ldots,d$ so that
$$
f_{\l+1}(r,z)= \sum_{k_{\l}=0}^{n}  {z_{\l}}^{k_{\l}} ~(2 \pi ~c_{{\l} 00 k_{\l}}).
$$

\smallskip

In this particular case, we can take $a_{ij0},b_{ij0}$ and
$c_{\l 00 k_{\l}}$ in order that all coefficients of $\bar{f}_1$ and
$f_{\l+1}$  are  linearly independent for all $\l=1,2,\ldots,d$. So we
can choose these coefficients in such a way that $\bar{f}_1$ has
${(n-1)}/{2}$ simple positive real roots and $f_{\l+1}$ has $n$
simple real roots for each $\l=1,2,\ldots,d$. Then $(\bar{f}_1,f_{2},
\ldots,f_{d+1})$ has $n^d(n-1)/{2}$ solutions with $r>0$.

\smallskip

Now assume $n$ is even. Then $f=r\bar{f}_1$  where
$$
\bar{f}_1(r,z)= A_1 + A_3r^2 + \ldots + A_{n-1} r^{n-2}.
$$
with $A_p$ given in \eqref{eq.Ap.continuous}.

\smallskip

Note that $\bar{f}_1$ has degree $n-1$ as a polynomial in the
variables $r$ and $z$ and $f_{\l+1}$ given in
\eqref{eq.f2.continuous} has degree $n$ as polynomials in the
variables $r$ and $z$ for all $l=1,2,\ldots,d$, so
$(\bar{f}_1,f_{2}, \ldots, f_{d+1})$ have at most $ n^d(n-1)/{2}$
solutions of type $(r,z)$ with $r>0$.

\smallskip

To prove that this number is reached we consider
$a_{10k_1},b_{01k_1}\neq 0$  and we take zero all the other
$a_{ijk},b_{ijk}$. Then
$$
\bar{f}_1(r,\z)=A_1=\sum_{k_1=0}^{n-1} z_1^{k_1} \left( a_{10k_1} ~\mu_{(2,0)}
+   b_{01k_1} ~\mu_{(0,2)}
 \right)
$$
is a complete polynomial on variable $z_1$ of degree $n-1$. Now  for $\l=2, 3\ldots,d$ we
take $c_{1ij0}, c_{\l 00k_{\l}} \neq 0$  and we
take zero all the other $c_{\l ijk}$ so that
\begin{align*}
f_{2}(r,\z) &= \sum_{\footnotesize \begin{matrix} i+j=0 \\i,j~even
\end{matrix}}^{n} r^{i+j} ~c_{1ij0}  ~\mu_{(i,j)}  ,\\
f_{\l+1}(r,\z) &=\sum_{k_{\l}=0}^{n} z_\l^{k_\l}~  ~c_{\l 00k_{\l}} 2\pi,
\end{align*}
for $\l=2, 3\ldots,d$.

\smallskip 

Here we take $a_{10k_1},~b_{01k_1}$, $c_{1ij0}$ and $c_{\l 00k_{\l}}$ in
such a way for that all coefficients of $\bar{f}_1$ and $f_{\l+1}$
for $\l=1,2,\ldots,d$, are linearly independent. Therefore we can
choose these coefficients in order that $\bar{f}_1$ has $n-1$ simple
real roots, $f_{2}$ has ${n}/{2}$ simple positive real roots and
$f_{\l+1}$ has ${n}$ simple real roots for each $\l=2,3,\ldots,d$ . In
this case $(\bar{f}_1,f_{2},\ldots, f_{d+1})=(0,\ldots,0)$ has
$n^d(n-1)/2$ solutions with $r>0$.  Furthermore by  independence of
the coefficients these solutions can be taken in a way that the
Jacobian of $f$ in all these solutions is nonzero.

\smallskip

This completes the proof of  Theorem \ref{th.cycles.systemX}.

\subsection{Proof of Theorem \ref{th.cycles.discontinuous}}
\label{section:proof.th.discontinuous}

 Analogously to what we did in the previous section we shall study the existence of limit cycles bifurcating of periodic solutions of system \eqref{eq.perturbed.discontinuous} when $\varepsilon=0$ which are contained in
the cylindrical annulus $\tilde{A}$ defined in \eqref{2*}. 

\smallskip 

In $\tilde{A}$ we have for
$\varepsilon$ sufficiently small $\dot{\theta} >0$ for all $(\theta,r,z)
\in \tilde{A}$.  Taking  $\theta$ as independent variable system
\eqref{eq.perturbed.discontinuous} in $\tilde{A}$ becomes

 \begin{equation}
 \label{S.new.coord}
 \begin{array}{c}
 \left.
 \begin{array}{lll}
 r'~ &=  & \e  F_1(\theta,r, \z) + \mathcal{O}(\e^2) , \vspace*{0.2 cm}\\
 \z'_{\l} & = & \e F_{\l+1}(\theta,r, \z) + \mathcal{O}(\e^2)
  \end{array}
 \right\} \quad \hbox{if}\quad h(\theta,r,\z) >0 ,\vspace*{0.3 cm} \\ 
 \left.
 \begin{array}{lll}
 r'~ &=  & \e  G_1(\theta,r, \z) + \mathcal{O}(\e^2) , \vspace*{0.2 cm}\\
 \z'_{\l} & = & \e G_{\l+1}(\theta,r,\z) + \mathcal{O}(\e^2)
 \end{array}
 \right\} \quad \hbox{if}\quad h(\theta,r, \z) <0 ,
 \end{array}
 \end{equation}
where
\begin{equation} 
\label{eq.G1.G2}
\begin{array}{lll}
G_1(\theta,r,\z)& = &\dis\sum_{i+j+k=0}^{n} r^{i+j} \z^k  \big(  \alpha_{ijk}
\cos^{i+1}\theta \sin^j\theta  \vspace*{0.15cm}\\
& & +  \beta_{ijk}\cos^{i}\theta \sin^{j+1}
\theta \big), \vspace*{0.15cm} \\
G_{\l+1}(\theta,r,\z)& = &\dis\sum_{i+j+k=0}^{n} r^{i+j} \z^k \gamma_{\l ijk}
\cos^i\theta \sin^j\theta,
\end{array}
\end{equation}

for  $\l=1,2,\ldots,d$, $F_1$ and $F_2$ given in \eqref{eq.F1.F2}, and  $h(\theta,r,\z)= \sin \theta$. Then for $(\theta,r,\z) \in
h^{-1}(0)$ we have $\dis\frac{\partial h}{\partial \theta}(\theta,
r,z)_{\mid {\theta \in \lbrace 0,\pi \rbrace }}=\cos\theta_{ \mid {\theta
\in \lbrace 0,\pi \rbrace}}=\pm 1 \neq 0$.

\smallskip

Now, let

$$
I_{(p,q)}=\displaystyle \int_0^\pi \cos^p\theta\sin^q\theta\der\theta \quad\text{and}
\quad J_{(p,q)} = \displaystyle \int_\pi^{2\pi} \cos^p\theta\sin^q \theta \der \theta.
$$ 

Then $(-1)^q  J_{(p,q)}= I_{(p,q)}$ and $I_{(p,q)}=J_{(p,q)}=0$ if only if $p$ is odd. Thus  we have
\begin{align}
\label{f1f2}
f_1(r,\z)= &\int_{0}^{\pi}F_1(\theta,r,\z) \der \theta + 
\int_{\pi}^{2\pi}G_1(\theta,r,\z) \der \theta \nonumber\\
= &\sum_{\footnotesize \begin{matrix} i+j+k=0 \\ i~odd \end{matrix}}^{n}
 r^{i+j} \z^k \left( a_{ijk} +(-1)^j \alpha_{ijk} \right) I_{(i+1,j)} \nonumber \\
& + \sum_{\footnotesize \begin{matrix} i+j+k=0 \\i~even
\end{matrix}}^{n} r^{i+j} \z^k \left(b_{ijk}-(-1)^j\beta_{ijk} \right)
I_{(i,j+1)},\\
f_{\l+1}(r,\z)=& \int_{0}^{\pi}F_{\l+1}(\theta,r,\z) \der\theta + \int_{\pi}^{2\pi}
G_{\l+1}(\theta,r,\z) \der\theta \nonumber \\
=& \sum_{\footnotesize\begin{matrix} i+j+k=0 \\ i~even \end{matrix}}^{n}
r^{i+j} \z^k~  (c_{\l ijk} +(-1)^j \gamma_{\l ijk}) ~  ~I_{(i,j)}, \nonumber
\end{align}
for $\l=1,2,\ldots,d$.

\smallskip

Then $f_{\l+1}$ are polynomials on variables $r$ and $z$ of degree $n$ for each $\l=0,1, \ldots, d$. By B\'{e}zout's theorem $f$ has at most  $n^{d+1}$ solutions (with $r >0$).

\smallskip

To prove that this number is reached we choose a particular e\-xam\-ple. So  take $a_{ij0} - (-1)^j \alpha_{ij0} , b_{ij0} - (-1)^j \beta_{ij 0} \neq 0$ and we
take zero all the other $a_{ij k} - (-1)^j \alpha_{ij k} , b_{ij k} - (-1)^j \beta_{ij k } $. Then ${f}_1$ is a real polynomial on $r$ of degree $n$ that does not depend of $\z$.

\smallskip

Analogously for each $\l=1,2,\ldots,d$ we take
$c_{\l 00 k_\l} + \gamma_{\l 00 k_\l} \neq 0$ and we
take zero all the other $c_{\l ij k} + \gamma_{\l ij k} $,  so that
$$
f_{\l+1}(r,z) = \sum_{k_\l=0}^{n} z_\l^{k_\l} ~(c_{\l 00 k_\l} +
\gamma_{\l 00 k_\l})\pi . 
$$

\smallskip

Under such conditions all coefficients of ${f}_1$ and $f_{\l+1}$
for $\l=1,2,\ldots,d$ can be taken to be linearly independent from
the appropriate choice of $a_{ij0},~ \alpha_{ij0},
~b_{ij0},~\beta_{ij0}$, $c_{\l 00 k_\l}$ and $\gamma_{\l 00 k_\l}$. Such values
can be taken so that ${f}_1$ is a complete real polynomial on
variable $r$ of degree $n$,  and therefore it can have $n$
simple positive real  roots. Furthermore for each $\l=1,2,\ldots,d$ the polynomials $f_{\l+1}$ is a complete real
polynomial on variable $z_\l$ with $n$ simple real roots. So for this particular case the number of zeros of $f$ is $
n^{d+1}$. By independence of the coefficients such solutions can be
taken so that the Jacobian of $f$ evaluated at them is nonzero.

\smallskip

By Theorem \ref{Th.discontinuous.averaging} this completes the proof
of Theorem \ref{th.cycles.discontinuous}.

\subsection{Proofs of Corollaries \ref{cor.1} and \ref{cor.2}}
\label{section:proof.cor}

\begin{proof}[Proof of Corollary \ref{cor.1}]
To prove Corollary \ref{cor.1} we follow the steps of the proof
presented in subsection \ref{section:proof.th.continuous}, however we
highlight some differences. We consider the change of coordinates
given in \eqref{eq.polar.coordinates} applied to system
\eqref{eq.systemX.perturbed} taking $a_{00k}=b_{00k}=0$ for all $k$
and instead of \eqref{1*} we obtain
\begin{align*}
\dot{\theta}&= 1+ \varepsilon \sum_{i+j+k=0}^{n} r^{i+j} z^k \left(a_{ijk}
\cos^i\theta \sin^{j+1}\theta  +  b_{ijk} \cos^{i+1}\theta \sin^j\theta \right),\\
\dot{r}&= \varepsilon \sum_{i+j+k=0}^{n} r^{i+j}z^k \left( a_{ijk}
\cos^{i+1}\theta \sin^j\theta  +  b_{ijk}\cos^{i}\theta \sin^{j+1}\theta\right),\\
\dot{z}_{\l}&= \varepsilon\sum_{i+j+k=0}^{n} c_{\l ijk}~r^{i+j} z^k \cos^i\theta \sin^j\theta,
\end{align*}
for $\l=1,2,\ldots,d$.

\smallskip

As $r$ does not appear in the denominator of $\dot{\theta}$, if
$\varepsilon$ is sufficiently small $\dot{\theta}>0$ for every 
$(\theta,r, \z)$ in a ball $B$ of an arbitrary given radius around the
origin of $\R^{d+2}$. Now in the ball $B$ the variable $r$ can be approximated to
the zero as we want, this cannot occur working with the cylindrical
annulus $\tilde{A}$ of subsection \ref{section:proof.th.continuous}.
From here the calculations are done analogously to section
\ref{section:proof.th.continuous}, and we obtain the same maximum
number of zeros of the averaging function for the new system
\eqref{eq.systemX.new.coordinates} with $a_{00k}=b_{00k}=0$.
\end{proof}

\begin{proof}[Proof of Corollary \ref{cor.2}]
The same above argument can be used for proving Corollary
\ref{cor.2} and we follow the steps of the proof presented in
subsection \ref{section:proof.th.discontinuous}. More precisely, we
apply the change of coordinates \eqref{eq.polar.coordinates} to
system \eqref{eq.perturbed.discontinuous} considering
$a_{00k}=\alpha_{00k}=b_{00k}=\beta_{00k}=0$ for all $k$ in order to
obtain  an expression for $\dot{\theta}$ in both systems $y>0$ and
$y<0$ with denominator that does not depend on $r$.  So for
$\varepsilon$  sufficiently small we have $\dot{\theta}>0$ for every $(r,\theta,
z)$ in the ball $B$ of the proof of Corollary \ref{cor.1}. 

\smallskip 

Since the exponent $i+j$ of the variable $r$ on the polynomial $f_1$, given in \eqref{f1f2}, is at least one because $i$ is odd, we have
\begin{equation}
f_1(r,\z)=  A_1 r + A_2 r^2 + \ldots + A_n r^n
\end{equation}
\noindent  with
\begin{equation}
\label{eq.Ap}
\begin{array}{lll}
A_p&=& \dis \sum_{k=0}^{n-p} ~ \sum_{\footnotesize \begin{matrix} i+j=p \\
i~odd \end{matrix}} z^k  \left( a_{ijk} +(-1)^j \alpha_{ijk} \right) I_{(i+1,j)} \\
& & + \dis \sum_{k=0}^{n-p}~ \sum_{\footnotesize
  \begin{matrix}  i+j=p \\ i~even \end{matrix}} z^k \left(b_{ijk} -
 (-1)^j \beta_{ijk} \right)   I_{(i,j+1)},
\end{array}
\end{equation}
for $p=1,2,\ldots,n$. 

\smallskip

Consequently we obtain $f_1=r \bar{f}_1$ where
$$\bar{f}_1(r,\z)= A_1 + A_2 r +
\ldots + A_n r^{n-1} .
$$

 As  $r >0$, to know the solutions of
$(f_1,f_{2}, \ldots, f_{d+1})=(0,\ldots,0)$ is equivalent to solve
$(\bar{f}_1,f_{2}, \ldots, f_{d+1})=(0,\ldots, 0)$. But, $\bar{f}_1$
is a polynomial on variables $r$ and $z$ of degree $n-1$ and the functions 
$f_{\l+1}$  given in \eqref{f1f2} are polynomials on variables $r$ and $z$ of degree $n$ for
each $\l=1,2, \ldots, d$. By B\'{e}zout's theorem $f$ has at most $n^d(n-1)$ solutions (with $r >0$).

\smallskip

To prove that this number is reached we choose a particular example. This step of this proof is done as in subsection
\ref{section:proof.th.discontinuous}  for obtaining the maximum
number of zeros of the averaging function for the new system
\eqref{S.new.coord} with
$a_{00k}=\alpha_{00k}=b_{00k}=\beta_{00k}=0$. Again in the ball $B$ the variable $r$ can be approximated to zero as we want.

\end{proof}

\section*{Acknowledgements}

The first author is partially supported by a MINECO/FEDER grant
MTM2008--03437, an AGAUR grant 2009SGR--0410, an ICREA Academia,
FP7--PEOPLE--2012--IRSES--316338 and 318999, and
FEDER/UNAB10--4E--378.  The second  author is partially supported by
a FAPESP--BRAZIL grant 2012/18780--0. The third author is partially
supported by a FAPESP-BRAZIL grant 2012/23591--1 and 2013/21078--8.

\end{document}